\newtheorem{theorem}{Theorem}[section]
\newtheorem{definition}[theorem]{Definition}
\newtheorem{proposition}[theorem]{Proposition}
\newtheorem{lemma}[theorem]{Lemma}
\newtheorem{remark}[theorem]{Remark}
\renewcommand{\Re}{\operatorname{Re}}
\newcommand{\N}{\mathbb{N}}
\newcommand{\norm}[1]{{\left\| {#1} \right\|}}
\newcommand{\pare}[1]{\left( #1\right)}
\newcommand{\vvector}[3]{\begin{bmatrix}
    #1\\#2\\#3
\end{bmatrix}  }
\newcommand{\operA}{\mathbb{A}}
\numberwithin{equation}{section}
\title[Semilinear Moore-Gibson-Thompson equations]{Local well-posedness for a class of semilinear Moore-Gibson-Thompson equations with subcritical nonlinearities} 
\author[F. D. M. Bezerra]{Flank D. M. Bezerra$^*$}\thanks{$^*$Research partially supported by
CNPq \# 303039/2021-3, Brazil.}
\address[F. D. M. Bezerra]{Departamento de Matem\'atica, Universidade Federal da Para\'iba, 58051-900 Jo\~ao Pessoa PB, Brazil.}
\email{flank@mat.ufpb.br}
\author[L. M. Salge]{Luís M. Salge}
\address[L. M. Salge]{Instituto de Matemática e Estatística, Universidade do Estado do Rio de Janeiro, 20550-900 Rio de Janeiro RJ, Brazil.}
\email{luis.salge@ime.uerj.br}
\date{\today}
\begin{document}

\begin{abstract}
In this paper, we study a class of higher-order semilinear evolution equations inspired by the Moore-Gibson-Thompson model introduced by Dell'Oro, Liverani and  Pata (2023), involving strongly elliptic operators of order ($2m$) with homogeneous boundary conditions. The associated unbounded linear operator is   a sectorial operator with zero belonging to the resolvent set, allowing the construction of fractional powers spaces, and the analysis of their spectral properties. We prove the local well-posedness of the corresponding semilinear Cauchy problem under subcritical nonlinearities. Our framework clarifies the role of extrapolation spaces and fractional domains in handling the lack of accretivity and bounded imaginary powers of the operator.

\vskip .1 in \noindent {\it Mathematics Subject Classification 2020}: 35G10, 35K46, 47A75, 47D03, 
\newline {\it Key words and phrases:} sectorial operator; fractional powers; analytic semigroup; higher order equation.
\end{abstract}

\maketitle

\tableofcontents

\section{Introduction}

Let $\Omega\subset\mathbb{R}^N$ be a bounded domain of class $\mathcal{C}^{2m+\mu}$, where $N>2m$, $m\in\mathbb{N}$ and $\mu\in(0,1)$. For  a scalar function $u=u(x,t)$ defined on $\Omega\times(0,T)$ we denote by $D^\alpha u$ its partial derivative $D_1^{\alpha_1}\ldots D^{\alpha_n}_nu$, where $D_i= \partial_{x_{i}}$, $\alpha=(\alpha_1,\ldots,\alpha_n)$ is  a multi-indice and $|\alpha|=|\alpha_1|+\ldots+|\alpha_n|$. In this paper, $\mathcal{A}$ denotes the differential operator given by
\[
\mathcal{A}u=\displaystyle\sum_{|\alpha|,|\beta|\leq m} (-1)^{|\beta|}D^\beta(a_{\alpha,\beta}(x)D^\alpha u),
\]
which is a $2m$-th order strongly elliptic differential operator. All coefficients $a_{\alpha,\beta}$ are of class  $C^{4m+|\beta|+\mu}(\Omega)$ and bounded in $\Omega$ by a constant $c_{up}<0$.

Together with \eqref{Eq01Local}, we consider the boundary condition
\begin{equation}\label{Condd01}
B_0u(x,t)=B_1u(x,t)=\ldots=B_{m-1}u(x,t)=0,\quad  x\in\partial\Omega,\ t>0,
\end{equation}
where $B_0,B_1,\ldots,B_{m-1}$ are linear and time independent boundary operators, i.e.,
\[
B_j=\displaystyle\sum_{|\sigma|\leq m_j}b_\sigma^j(x)D^\sigma,\ j=1,\ldots,m,\ 0\leq m_j\leq 2m-1.
\]

The triple $(-\mathcal{A},\{B_j\},\Omega)$ forms a ‘‘regular elliptic boundary value problem'', according to \cite[Pages 1-3]{Ch}, \cite[Definition 1.2.1]{ChD}, \cite[Pages 1-3]{Ch0}, \cite[Page 76]{AF}, \cite[Page 125]{LM} and \cite{JN}. According to \cite{AF}, it satisfies the ‘‘root condition'', the ‘‘smoothness condition'', certain ‘‘complementary condition'' and the system $\{B_j\}$ is normal. In particular, we need our differential operator $A$ to be uniformly strongly elliptic in $\Omega$; that is, we assume that for some $C>0$
\[
\displaystyle\sum_{|\alpha|,|\beta|=m}a_{\alpha,\beta}(x)\xi^\alpha\xi^\beta\geq C|\xi|^{2m},
\]
for any $ x\in\Omega$, $\xi\in\mathbb{R}^n$.

Then  the unbounded linear operator $(\mathcal{A},D(\mathcal{A}))$ acting in $L^2(\Omega)$  with domain  
\[
D(\mathcal{A})=\{u\in H^{2m}(\Omega); B_0u=\ldots=B_{m-1}u=0\ \mbox{on}\ \partial\Omega\},
\]
which is the close subspace of the space $H^{2m}(\Omega)$ consisting of functions satisfying, in the sense of traces, boundary conditions $B_j$.  The bilinear form $p_{\mathcal{A}}$ connected to the linear operator $\mathcal{A}$, given by
\[
p_{\mathcal{A}}(u,v)=\displaystyle\sum_{|\alpha|,|\beta|\leq m} a_{\alpha,\beta}(x) D^\alpha u D^\beta v,
\]
satisfies the following conditions:

\noindent i) Coerciveness inequality
\begin{equation}\label{coerciveness}
\int_\Omega p_{\mathcal{A}}(w,w)dx+C_2\|w\|_{L^2(\Omega)}^2\geq C_1\|w\|^2_{H^{m}(\Omega)}
\end{equation}
for any $w\in H^{m}_0(\Omega)$. 

\noindent ii) Green's Identity
\[
\int_\Omega (\mathcal{A}w)wdx=\int_\Omega p_{\mathcal{A}}(w,w)dx.
\]
for any $w\in H^{m}_0(\Omega)$. 

Therefore, $(\mathcal{A}+\omega I,D(\mathcal{A}+\omega I))$ with domain $D(\mathcal{A}+\omega I)=D(\mathcal{A})$  is a sectorial operator in $L^2(\Omega)$ for each $\omega\in\mathbb{R}$, according to \cite[Example 1.3.8]{ChD}. In particular,  $\mu_0\geq0$ may be chosen in \eqref{Eq01Local}, for which  $\mathcal{A}+\mu_0I$ becomes positive in $L^2(\Omega)$, consequently the spectrum of $\mathcal{A}+\mu_0I$ is a point spectrum consisting of eigenvalues $0<\lambda_0\leq\lambda_1\leq\cdots\leq\lambda_{n-1}\leq\lambda_n\leq\cdots$, with $\lim_{n\to\infty}\lambda_n=\infty$, where $\{\lambda_n\}_n$ denotes the ordered sequence of eigenvalues of $\mathcal{A}+\mu_0I$,  including their multiplicity.

\subsection{Main problem}

From now on, we denote
\[
A=\mathcal{A}+\mu_0I,
\]
and inspired by the class of Moore-Gibson-Thompson (MGT) equations  treated in \cite{DLP,KaLaMa} and \cite{KaLaPo}, we consider the semilinear  evolution 
 equation with $2m$-th order ($m>1$) elliptic operator
\begin{equation}\label{Eq01Local}
\partial_t^3u+\alpha \partial_t^2u+\beta A\partial_tu+\gamma Au +\delta A\partial_t^2u=f(u,\partial_tu,\partial_t^2u),
\end{equation}
subject to the boundary condition \eqref{Condd01}, and initial condition
\begin{equation}\label{CondInCondi}
    u(x,0)=u_0(x),\quad x\in\Omega,
\end{equation}
where $\alpha,\beta,\gamma,\delta$ are positive constants, with $x\in\Omega$, $t>0$, $u=u(x,t)$, with the following condition
\[
\dfrac{\gamma}{\alpha+\delta\lambda_0}<\beta,
\]
as well as in \cite[Theorem 5.1 and Remark 5.2, Theorem 4.1]{DLP}.

 It is often convenient to write the MGT equation in abstract form \eqref{Eq01Local}  and it has been shown \cite{KaLaMa,MaMcDeTri,HoZhaHuJi}.

Let $\sigma$ be an arbitrary real number. We introduce the  $\sigma-$th power of  the operator $A$, acting in $X$ according to the rule 
\[
A^\sigma \phi=\displaystyle\sum_{k=1}^\infty \lambda_k^\sigma\phi_ke_k,
\]
where $\phi_k$ is the Fourier coefficients of a function $\phi\in X$, $\phi_k=(\phi,e_k)$, see \cite[Section 6.4]{CarLanRob}. The domain of this
operator has the form
\[
D(A^\sigma)=\left\{\phi\in H;\displaystyle\sum_{k=1}^\infty \lambda_k^{2\sigma}(\phi, e_k)^2<\infty\right\}.
\]
It immediately follows from this definition that $D(A^{\sigma_1})\subset D(A^{\sigma_2})$ for any $\sigma_1\geq \sigma_2$. On the set $D(A^{\sigma})$, we define the inner product
\[
(\phi,\varphi)_\sigma=\displaystyle\sum_{k=1}^\infty \lambda_k^{2\sigma}\phi_k\varphi_k=(A^\sigma\phi,A^\sigma\varphi).
\]
For element of $D(A^\sigma)$ we introduce the norm
\[
\|\phi\|_{D(A^\sigma)}^2=(\phi,\phi)_\sigma=\displaystyle\sum_{k=1}^\infty \lambda_k^{2\sigma}|\phi_k|^2=(A^\sigma\phi,A^\sigma\phi)=\|A^\sigma\phi\|.
\]

Let $X^\alpha$ be the fractional power spaces associated to the operator $A$, $0\leq\alpha\leq1$; namely, $X^\alpha=D(A^\alpha)$ the domain of the linear operator $A^\alpha$ endowed with the graphic norm. In particular
\[
X^0=L^2(\Omega),
\]
\[
X^{\frac12}=
\begin{cases}
H^{m}_0(\Omega),& \mbox{if}\ B_0=\mbox{Identity},\\
H^{m}(\Omega),& \mbox{other cases},
\end{cases}
\]
and
\[
X^1=D(A)=\{u\in H^{2m}(\Omega); B_0u=\ldots=B_{m-1}u=0\ \mbox{on}\ \partial\Omega\}.
\]

Recall that the extrapolation space $X_{(-1)}$ of $X$ generated by $A$ is the completion of the normed space $(X,\|A^{-1}\cdot\|_X)$. Let $A_{(-1)}$ be the closure of $A$ in $X_{(-1)}$, we have
\[
X_{(-1)}=H^{-m}(\Omega)
\]
and
\[
D(A_{(-1)})=
\begin{cases}
H^{m}_0(\Omega),& \mbox{if}\ B_0=\mbox{Identity},\\
H^{m}(\Omega),& \mbox{other cases},
\end{cases}
\]

One way to derive \eqref{Eq01Local}, for the concrete choice of the linear operator $A$ equal to negative Laplacian operator with zero Dirichlet boundary condition, $-\Delta_D$, on the boundary of a bounded smooth domain $\Omega\subset\mathbb{R}^N$ with $N>2cm$,  is to view it as a model describing heat conduction, see e.g. \cite{DLP,KaLaMa,KaLaPo}. In this case, we have
\begin{equation}\label{FreFtgyy}
-\Delta_D:D(-\Delta_D):=H^2(\Omega)\cap H_0^1(\Omega)\subset L^2(\Omega)\to L^2(\Omega)\quad     
u\mapsto -\Delta_D u=-\displaystyle\sum_{i=1}^N\partial_{x_i}^2u.
\end{equation}

In terms of nonlinearity  in \eqref{Eq01Local}, to make the presentation easier, let us assume that $f$ in \eqref{Eq01Local} is such that
\begin{equation}\label{CondNonLinf}
f(u,\partial_tu,\partial_t^2u)=f_1(u)+f_2(\partial_tu)+f_3(\partial_t^2u),
\end{equation}
where $f_\ell:\mathbb{R}\to\mathbb{R}$ is a twice continuously differentiable function for any $\ell=1,2,3$.  We shall require $f_i$ fulfill the estimates
\begin{equation}\label{NovaEstGGDDDima}
|f_i''(s)|\leq c(1+|s|^{\varrho-2})    
\end{equation}
for any $s\in\mathbb{R}$, with $1<\varrho\leq\frac{N+2m}{N-2m}$, $i=1,2$,  for some $c>0$, and $f_3$ being globally Lipschitz continuous.

\subsection{Definitions and Notations}

Throughout this paper, we use the following notations 
\[
Y=X^{\frac12}\times X^{\frac12}\times X
\]
endowed with the standard product norm
\[
\left\|\begin{bmatrix}
    u\\ v\\ w
\end{bmatrix}\right\|_{Y}^2=\|u\|^2_{X^{\frac12}}+\|v\|^2_{X^{\frac12}}+\|w\|^2_X
\]
and we denote by $L(Y)$ the Banach space of bounded linear operators on $Y$.

Introducing the state vector
\[
{\bf u}(t)=\begin{bmatrix}u(t)\\ v(t)\\ w(t)\end{bmatrix}
\]
we view \eqref{Eq01Local} as the semilinear ODE in $Y$
\begin{equation}\label{SemilinearCauchyProb}
\dfrac{d{\bf u}}{dt}+\mathbb{A}{\bf u}=\mathbb{F}({\bf u}).
\end{equation}
Here, $\mathbb{A}$ is the unbounded linear operator on $Y$ acting as
\begin{equation}\label{OpA001}
\forall \begin{bmatrix}     u\\ v\\ w \end{bmatrix}\in  D(\mathbb{A}),\quad \mathbb{A}\begin{bmatrix}     u\\ v\\ w \end{bmatrix}=\begin{bmatrix}v\\ w\\ -\alpha w-A(\beta v+\gamma u+\delta w)\end{bmatrix}
\end{equation}
of dense domain
\begin{equation}\label{OpA002}
D(\mathbb{A})=\left\{\begin{bmatrix}     u\\ v\\ w \end{bmatrix}\in Y; w\in X^{\frac12},\beta v+\gamma u+\delta w\in X^1\right\}.    
\end{equation}

The nonlinearity $F$ is given by
\begin{equation}\label{NonLinF}
\forall \begin{bmatrix}     u\\ v\\ w \end{bmatrix}\in Y,\quad \mathbb{F}({\bf u})=\begin{bmatrix}
   0\\ 0\\ f(u,v,w) 
\end{bmatrix}.
\end{equation}

As we shall see, zero belongs to the resolvent set of the closed and densely defined operator $\mathbb{A}$, and if we denote the extrapolation space $Y_{(-1)}$ of $Y$ generated by $\mathbb{A}$, then it is the completion of the normed space $(Y,\|\mathbb{A}^{-1}\cdot\|_Y)$. Some of the difficulties to be addressed here are:
\begin{itemize}
\item[$i)$] The domain of the linear operator $\mathbb{A}_{(-1)}$ in $Y_{(-1)}$, closure of the linear operator $\mathbb{A}$ is a cross product, as well as, it is the domain the linear operator $\mathbb{A}$ in $Y$. This is not the case when $\mathbb{A}$ is the strongly damped wave operator as was observed in the papers \cite{carab,CC0,CC,CC1,PaMa} and \cite{ViPaMaSqua}; 
\item[$ii)$] The linear operator $\mathbb{A}$ does not have bounded imaginary power in $Y$;
\item[$iii)$] The linear operator $\mathbb{A}$ is not accretive in $Y$.
\end{itemize}

The structure of the paper is as follows. In Section \ref{S2}  we recall fundamental aspects of the theory of fractional powers of non-negative  (and not necessarily self-adjoint) operators, which have important links with partial differential equations.  Moreover, we study the spectral properties of the fractional power operators $\mathbb{A}$.  Finally, in Section \ref{MainSection} we prove the main result of the paper, and present a result of local well-posedness for the semilinear Cauchy problem associated with the differential equation \eqref{SemilinearCauchyProb}.

\section{Spectral properties of the linear operators}\label{S2}

Our objective in this section is to prove the global existence of solutions and the existence of a pullback attractor for the boundary-initial-value problem 
\begin{equation}\label{MainPro}
\begin{cases}
\dfrac{d{\bf u}}{dt}+\mathbb{A}{\bf u}=\mathbb{F}({\bf u}),\ t>0,\\
{\bf u}(0)={\bf u}_0,
\end{cases}
\end{equation}
To do this, consider the abstract semilinear Cauchy problem

\begin{equation}\label{abstrcauchyp}
\begin{cases}
\dfrac{dz}{dt} +Bz = F(z), \ t > 0,\\
z(0) = z_0.
\end{cases}
\end{equation}

\begin{definition}
Let $Z$ be a Banach space, and let $B:D(B)\subset Z\to Z$ be a sectorial operator.   For a continuous function $F:Z^{\eta} \to Z$, with $\eta \in [0,1)$, $z(\cdot, z_0): [0, T_0) \to Z^{\eta}$ is a solution for \eqref{abstrcauchyp} if it is continuous, continuously differentiable in $(0, T_0), z(t,  z_0) \in D(B)$ for $t \in (0, T_0)$ and \eqref{abstrcauchyp} is satisfied for all $t \in (0, T_0)$.
\end{definition}

\begin{theorem}\label{existenceofsolution}
    If the linear operator B is  sectorial, and $F : Z^{\eta} \to Z$ is Lipschitz continuous in bounded subsets of $Z^\eta$ with $\eta\in[0,1)$, then, given $r > 0$ there exists a $T_0 > 0$ and for each $z_0 \in Z^{\eta}$ with $\|z_0\|_{Z^{\eta}} \leq r$ a function $z(\cdot,  z_0) \in C([0, T_0], Z^\eta) \cap C^{1}((0, T_0], Z^{\eta})$  with the property that
    $$\{z_0 \in Z^\eta: \|z_0\|_{Z^{\eta}}\ \leq r\} \ni z_0 \mapsto z(\cdot,  z_0) \in C([0,  T_0], Z^\eta)$$
is continuous; $ z(\cdot, z_0)$ is the unique solution of \eqref{abstrcauchyp}.
\end{theorem}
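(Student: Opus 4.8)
The plan is to recast \eqref{abstrcauchyp} as a fixed-point problem for its variation-of-constants (mild) formulation and apply the Banach contraction principle, following the classical parabolic scheme. First I would exploit the sectoriality of $B$: possibly after replacing $B$ by $B+\omega I$ for suitable $\omega>0$ (which merely replaces $F$ by $F(\cdot)+\omega(\cdot)$, still Lipschitz on bounded sets, and does not alter the notion of solution), $-B$ generates an analytic semigroup $\{e^{-Bt}\}_{t\geq0}$ satisfying, on any bounded time interval, the smoothing estimates $\norm{e^{-Bt}}_{L(Z)}\leq M$ and $\norm{B^\eta e^{-Bt}}_{L(Z)}\leq C\,t^{-\eta}$. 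These are the only analytic-semigroup facts I will use. A function $z$ is then a mild solution precisely when
\[
z(t)=e^{-Bt}z_0+\int_0^t e^{-B(t-s)}F(z(s))\,ds.
\]

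For existence and uniqueness, fix $r>0$ and $z_0$ with $\norm{z_0}_{Z^\eta}\leq r$. I would work in the complete metric space
\[
K=\left\{z\in C([0,T_0],Z^\eta):z(0)=z_0,\ \sup_{t\in[0,T_0]}\norm{z(t)-z_0}_{Z^\eta}\leq\rho\right\}
\]
with the sup metric, and define $\Phi(z)(t)$ to be the right-hand side of the integral equation. Using the local Lipschitz hypothesis on $F$—whose Lipschitz constant and bound depend only on $r+\rho$—together with $\norm{B^\eta e^{-B(t-s)}}_{L(Z)}\leq C(t-s)^{-\eta}$ and the convergence of $\int_0^t(t-s)^{-\eta}\,ds=\frac{t^{1-\eta}}{1-\eta}$ (here $\eta<1$ is crucial), I would show that for $T_0$ small enough $\Phi$ maps $K$ into itself and is a strict contraction. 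Since the contraction constant behaves like $CL\,\frac{T_0^{1-\eta}}{1-\eta}$ and depends on $z_0$ only through the bound $r$, a single $T_0$ works uniformly for all $z_0$ in the ball of radius $r$, giving the claimed uniform existence time; the Banach fixed-point theorem then yields the unique solution $z(\cdot,z_0)$. Continuous dependence on the data I would obtain by subtracting the integral equations for $z_0$ and $\tilde z_0$, estimating in $Z^\eta$, and applying a singular Gronwall inequality (the kernel $(t-s)^{-\eta}$ being weakly singular), which bounds $\sup_t\norm{z(t,z_0)-z(t,\tilde z_0)}_{Z^\eta}$ by a constant times $\norm{z_0-\tilde z_0}_{Z^\eta}$.

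The main obstacle is the regularity step: upgrading the mild solution to a genuine solution in the sense of the preceding Definition, i.e. showing $z(t)\in D(B)$ for $t\in(0,T_0)$ and $z\in C^1((0,T_0],Z^\eta)$ with the equation holding pointwise. This requires first proving that $t\mapsto F(z(t))$ is locally Hölder continuous on $(0,T_0]$—which follows from the Hölder continuity in time of the mild solution, itself extracted from the semigroup estimates and the integrability of the singular kernel—and then invoking the standard fact that, for a locally Hölder continuous inhomogeneity, the convolution term $\int_0^t e^{-B(t-s)}F(z(s))\,ds$ is continuously differentiable and takes values in $D(B)$. I expect this bootstrapping, rather than the fixed-point argument, to be where the real care is needed.
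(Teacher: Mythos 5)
The paper never proves Theorem \ref{existenceofsolution}: it is stated as background and used as a black box, being the classical local well-posedness theorem for abstract parabolic problems (essentially Theorem 2.1.1 of Cholewa--D\l otko \cite{ChD}; see also \cite{H,CC1}). So your proposal can only be compared against that standard argument, and it is in fact exactly that argument: variation of constants, contraction mapping with the weakly singular kernel $(t-s)^{-\eta}$ (this is where $\eta<1$ enters), a singular Gronwall inequality for continuous dependence, and a H\"older-continuity bootstrap to pass from mild to genuine solutions.

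One step, however, is not justified as written, and it is precisely the point that gives the statement its strength (and that Theorem \ref{DtTheoremmm} later inherits): the uniformity of $T_0$ over the ball $\lla{\|z_0\|_{Z^\eta}\le r}$. With your set $K$, centered at the constant function $z_0$, the invariance $\Phi(K)\subset K$ requires
\[
\sup_{0\le t\le T_0}\norm{e^{-Bt}z_0-z_0}_{Z^\eta}+CN\frac{T_0^{1-\eta}}{1-\eta}\le\rho,
\]
and the first term cannot be made small uniformly in $z_0$: strong continuity of an analytic semigroup is not uniform on bounded sets. Concretely, if $B$ is self-adjoint positive with eigenvalues $\lambda_n\to\infty$ and eigenvectors $e_n$, then for $z_0=r\,e_n/\norm{e_n}_{Z^\eta}$ one has $\norm{e^{-Bt}z_0-z_0}_{Z^\eta}=r\pare{1-e^{-\lambda_n t}}\to r$ as $n\to\infty$ for every fixed $t>0$, so no choice of $T_0>0$ makes this term small on the whole ball. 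Hence ``a single $T_0$ works uniformly because the contraction constant depends only on $r$'' is not enough: the contraction constant is uniform, but the self-map property is not. The repair is standard and cheap: either (i) take $\rho$ large rather than small, e.g.\ $\rho=2(1+M)r$ with $M=\sup_{0\le t\le 1}\norm{e^{-Bt}}_{L(Z^\eta)}$, so that $\norm{e^{-Bt}z_0-z_0}_{Z^\eta}\le(1+M)r\le\rho/2$ holds trivially and uniformly, or (ii) recenter $K$ at the free trajectory, $K=\lla{z:\ \sup_{t}\norm{z(t)-e^{-Bt}z_0}_{Z^\eta}\le\rho}$, so that the semigroup term disappears from the invariance estimate altogether; with either modification the rest of your scheme goes through. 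A minor second remark: your bootstrap (local H\"older continuity of $t\mapsto F(z(t))$ plus the convolution regularity lemma) yields $z(t)\in D(B)$ and $z\in C^1((0,T_0],Z)$; the stronger claim $z\in C^1((0,T_0],Z^\eta)$ requires one further iteration of the same smoothing argument (cf.\ \cite[Theorem 3.5.2]{H}).
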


\begin{theorem}
Let $\mathbb{A}$ be the unbounded linear operator defined by \eqref{OpA001}-\eqref{OpA002}. The following conditions hold:
\begin{itemize}
\item[$i)$] $-\mathbb{A}$ is  the infinitesimal generator of a strongly continuous semigroup in $Y$. The strongly continuous semigroup generated by  $-\mathbb{A}$ is exponentially stable if and only if $\chi>0$;
\item[$ii)$] Zero belongs to the resolvent set of $\mathbb{A}$;  namely the bounded linear operator  $\mathbb{A}^{-1}:Y\to Y$ is defined by
\[
\mathbb{A}^{-1}= \begin{bmatrix}
    -\gamma^{-1} \beta I & -\gamma^{-1} \delta I -\gamma^{-1} \alpha A^{-1} & -\gamma^{-1} A^{-1}  \\
    I & 0 & 0 \\
    0 & I & 0
\end{bmatrix}
\]
\item[$iii)$] $\mathbb{A}$ is a sectorial operator in $Y$.
\end{itemize}
\end{theorem}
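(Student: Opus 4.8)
The plan is to diagonalize through the self-adjoint operator $A$. Since $A$ is positive and self-adjoint on $X=L^2(\Omega)$ with a complete orthonormal eigenbasis $\{e_k\}$ and eigenvalues $0<\lambda_0\le\lambda_1\le\cdots\to\infty$, every fractional space $X^s$ is described by weighted Fourier coefficients, and the three one-mode vectors $(e_k,0,0)$, $(0,e_k,0)$, $(0,0,e_k)$ span a three-dimensional subspace $Y_k\subset Y$ left invariant by $\mathbb{A}$. On $Y_k$ the operator reduces to the companion-type matrix
\[
\mathbb{A}_k=\begin{bmatrix} 0 & 1 & 0\\ 0 & 0 & 1\\ -\gamma\lambda_k & -\beta\lambda_k & -(\alpha+\delta\lambda_k)\end{bmatrix},
\]
with characteristic polynomial $p_k(\zeta)=\zeta^3+(\alpha+\delta\lambda_k)\zeta^2+\beta\lambda_k\zeta+\gamma\lambda_k$. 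The whole theorem is then assembled from a uniform (in $k$) analysis of this family, together with the boundedness of $A^{-1}$ across the fractional scale.

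For $ii)$ I would argue directly: solving $\mathbb{A}(u,v,w)^\top=(g_1,g_2,g_3)^\top$ forces $v=g_1$, $w=g_2$ and $A(\beta v+\gamma u+\delta w)=-\alpha g_2-g_3$, whence $\gamma u=-\beta g_1-\delta g_2-\alpha A^{-1}g_2-A^{-1}g_3$. Reading off the coefficients reproduces exactly the stated matrix for $\mathbb{A}^{-1}$. It then remains to check that this formula maps $Y$ boundedly into $D(\mathbb{A})$: since $A^{-1}$ sends $X^s$ into $X^{s+1}$ continuously, one has $A^{-1}g_2\in X^{3/2}$ and $A^{-1}g_3\in X^1$, so the first component lies in $X^{1/2}$ and $\beta v+\gamma u+\delta w=-\alpha A^{-1}g_2-A^{-1}g_3\in X^1$, i.e.\ the range lands in $D(\mathbb{A})$; boundedness is immediate from the mapping bounds for $A^{-1}$. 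This gives $0\in\rho(\mathbb{A})$.

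For $i)$ and $iii)$ I would locate $\sigma(\mathbb{A})=\bigcup_k\{\text{roots of }p_k\}$ by the Routh--Hurwitz criterion: for the cubic $p_k$ with positive coefficients the only nontrivial Hurwitz inequality is $(\alpha+\delta\lambda_k)\beta\lambda_k>\gamma\lambda_k$, i.e.\ $\beta>\gamma/(\alpha+\delta\lambda_k)$. Because $\lambda_k\ge\lambda_0$, the standing hypothesis $\gamma/(\alpha+\delta\lambda_0)<\beta$ secures this for every $k$, with a gap controlled by the stability number $\chi$; this is precisely the dichotomy in $i)$, exponential decay being equivalent to $\chi>0$. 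Generation of the analytic semigroup by $-\mathbb{A}$ then follows once $iii)$ is in hand, since a sectorial operator generates an analytic semigroup; alternatively one may run Lumer--Phillips in the Dell'Oro--Liverani--Pata energy inner product, in which a shift of $\mathbb{A}$ becomes $m$-accretive. For sectoriality I would show the three roots of $p_k$ stay in a common sector of half-angle $<\pi/2$ (the two bounded roots converge to fixed limits and the escaping root runs off essentially along the real axis), and prove a resolvent bound $\|(\zeta I-\mathbb{A}_k)^{-1}\|_{Y_k}\le M|\zeta-a|^{-1}$ with $M$ independent of $k$, which patches into a global estimate on $Y=\overline{\bigoplus_k Y_k}$.

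The main obstacle is exactly this last uniform resolvent estimate. The norm that $Y$ induces on $Y_k\cong\mathbb{C}^3$ is the weighted one $\lambda_k|u_k|^2+\lambda_k|v_k|^2+|w_k|^2$, so after the diagonal rescaling $D_k=\operatorname{diag}(\lambda_k^{1/2},\lambda_k^{1/2},1)$ the relevant matrix $D_k\mathbb{A}_kD_k^{-1}$ still carries entries growing like $\lambda_k^{1/2}$ and $\lambda_k$ and is strongly non-normal. Hence eigenvalue location alone does not control the resolvent norm, and this is the analytic manifestation of the two difficulties flagged in the introduction --- the absence of accretivity and of bounded imaginary powers --- which preclude the usual variational/self-adjoint shortcuts. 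I expect the estimate to come from the explicit cofactor form of $(\zeta I-\mathbb{A}_k)^{-1}$, tracking the competition between the $O(\lambda_k)$ growth of the numerators and the growth of $|p_k(\zeta)|$ on the sector, and verifying that the bound is uniform in $k$ and survives the passage to the closure $\overline{\bigoplus_k Y_k}=Y$.
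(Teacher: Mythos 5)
Your item $ii)$ is correct and complete: solving $\mathbb{A}(u,v,w)^{\top}=(g_1,g_2,g_3)^{\top}$ componentwise yields exactly the stated matrix, and your verification that this formula maps $Y$ boundedly into $D(\mathbb{A})$ (using that $A^{-1}$ shifts the fractional scale by one) is all that is required. This is in fact more detail than the paper provides, since it dismisses $ii)$ with ``not difficult to see.''

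For items $i)$ and $iii)$, however, your proposal has a genuine gap, and you name it yourself: the uniform-in-$k$ resolvent estimate $\|(\zeta I-\mathbb{A}_k)^{-1}\|\leq M|\zeta-a|^{-1}$ on a common sector is never proved, only ``expected.'' That estimate is not a finishing touch; it \emph{is} the content of sectoriality here. As you correctly observe, after the rescaling $D_k=\operatorname{diag}(\lambda_k^{1/2},\lambda_k^{1/2},1)$ the blocks remain strongly non-normal with entries of order $\lambda_k^{1/2}$ and $\lambda_k$, so Routh--Hurwitz root location gives no control on resolvent norms: a family of non-normal matrices can have uniformly separated spectra while $\|(\zeta-\mathbb{A}_k)^{-1}\|$ blows up in $k$ at a fixed $\zeta$. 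Without carrying out the cofactor computation and a lower bound on $|p_k(\zeta)|$ over the sector, uniformly in $k$, neither $iii)$ nor the generation and stability claims of $i)$ are established. Three further points would need repair even then. First, since $\mathbb{A}$ has non-compact resolvent (the paper remarks this), $\sigma(\mathbb{A})$ is not the plain union $\bigcup_k\sigma(\mathbb{A}_k)$: it contains the closure of that union (the roots of $\delta\zeta^2+\beta\zeta+\gamma=0$ are accumulation points), and a $\zeta$ lying in no $\sigma(\mathbb{A}_k)$ still belongs to $\sigma(\mathbb{A})$ whenever the blockwise resolvents are not uniformly bounded --- so the spectral identification reduces to the same missing uniform estimate. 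Second, ``exponentially stable iff $\chi>0$'' does not follow from a spectral gap alone in infinite dimensions; one needs Gearhart--Pr\"uss or the spectrum-determined-growth property of \emph{analytic} semigroups, so sectoriality must logically come first, not after. Third, note that the blocks you wrote are the literal restriction of $\mathbb{A}$ as defined in \eqref{OpA001}, whose eigenvalues Routh--Hurwitz places in the \emph{left} half-plane, whereas the paper later asserts $\Re\sigma(\mathbb{A})>0$; this sign mismatch is inherited from the paper itself, but your write-up silently straddles both conventions. For comparison, the paper does not prove $i)$ and $iii)$ at all: it cites Dell'Oro--Liverani--Pata (their Remark 2.3, Theorems 2.1, 4.1 and 5.1), where precisely this resolvent analysis is carried out. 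Your route is a legitimate self-contained alternative in outline, but as written it stops exactly where the real work begins.
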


\begin{proof}
The item $i)$ is proved in \cite[Remark 2.3, Theorem 2.1 and Theorem 5.1]{DLP}. The item $ii)$ is not difficult to see. Finally, the item $iii)$ is proved in \cite[Theorem 4.1]{DLP}. 
\end{proof}

\begin{remark}
Let $\mathbb{A}$ be the unbounded linear operator defined by \eqref{OpA001}-\eqref{OpA002}. The operator \st{$\mathbb{A}$ but}  $\mathbb{A}$ does not have  compact resolvent. Indeed, if $\{u_{n}\}_{n} \subset X^{\frac{1}{2}}$ is a bounded  sequence that does not have convergent subsequence. Then 
\[
\operA^{-1}\begin{bmatrix}u_{n}\\ 0\\ 0\end{bmatrix} = \begin{bmatrix}-\gamma^{-1}\beta u_{n}\\ u_{n}\\ 0\end{bmatrix}
\]
does not have a convergent subsequence.
\end{remark}

\begin{remark}
We draw attention to the fact that $\mathbb{A}$ is not accretive in $Y$. Indeed, let $(0,0,w) \in D(\operA)$ with $w \neq 0,$ then 
\[
\begin{split}
\left(\operA \begin{bmatrix}0\\ 0\\ w\end{bmatrix},\begin{bmatrix}0\\ 0\\ w\end{bmatrix}\right)_{Y} &= ( -\alpha w - \delta A w, w)_{X}\\
&= -\alpha \norm{w}^{2}_{X} -\delta (Aw,w)_{X}
\end{split}
\]
that is,
\[
\begin{split}
\Re(\operA (0,0,w),(0,0,w))_{Y} = -\alpha \norm{w}^{2}_{X} -\delta (Aw,w)_{X} \leq -\alpha \norm{w}^{2}_{X} <0. 
\end{split}
\]
\end{remark}

Recall that the extrapolation space $Y_{(-1)}$ of $Y$ generated by $\mathbb{A}$ is the completion of the normed space $(Y,\|\mathbb{A}^{-1}\cdot\|_Y)$. We may infer that:

\begin{theorem}\label{TSerFRef}
Let $\mathbb{A}_{(-1)}$ be the closure of $\mathbb{A}$ in $Y_{(-1)}$. Then
\begin{itemize}
\item[$i)$] $\mathbb{A}_{(-1)}$ is sectorial  operator in the space $Y_{(-1)}$  with 
\begin{equation}\label{domaindef}
\begin{split}
D(\operA_{(-1)}) = &\left\{ \vvector{u}{v}{w} \in X^{\frac12}\times X^{\frac12}\times X^{\frac12}; \exists \vvector{u_{n}}{v_{n}}{w_{n}}_{n\in \N}\subset D(\operA), 
\vvector{u_{n}}{v_{n}}{w_{n}} \overset{X^{\frac12}\times X^{\frac12} \times X^{\frac12}}{\to}\vvector{u}{v}{w},\right. \\
&\left.
\mbox{ and } A(\beta v_{n}+\gamma u_{n} +\delta w_{n}) \overset{X^{-\frac12}}{\to} A_{(-1)}(\beta v+\gamma u +\delta w) \right\};
\end{split}.
\end{equation}
\item[$ii)$] $Re\sigma(\mathbb{A})>0$;
\item[$iii)$] For any $\alpha>0$ we can define the bounded linear operator on $Y$ by
\[
\mathbb{A}^{-\alpha}=\dfrac{1}{\Gamma(\alpha)}\int_0^\infty t^{\alpha-1}e^{-\mathbb{A}t}dt,
\]
which is one-one, where $\Gamma(\alpha)$ denotes the gamma function. For each $\alpha>0,\beta>0$, $\mathbb{A}^{-\alpha}\mathbb{A}^{-\beta}=\mathbb{A}^{-(\alpha+\beta)}$, and moreover, we can define $\mathbb{A}^{\alpha}$ being inverse of $\mathbb{A}^{-\alpha}$ ($\alpha>0$), $Y^\alpha=D(\mathbb{A}^{\alpha})=R(\mathbb{A}^{-\alpha})$, $\mathbb{A}^0=$ identity on $Y$.  
\item[$iv)$] $\mathbb{A}_{(-1)}$ has compact resolvent.   
\end{itemize}
\end{theorem}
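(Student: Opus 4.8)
The plan is to derive all four items from the general theory of extrapolation spaces for sectorial operators (see, e.g., \cite{CarLanRob}), using the structural facts already established for $\mathbb{A}$: it is sectorial in $Y$, $0\in\rho(\mathbb{A})$, and $\mathbb{A}^{-1}$ is the explicit matrix of the previous theorem. The organizing remark is that, by the very definition $\|\cdot\|_{Y_{(-1)}}=\|\mathbb{A}^{-1}\cdot\|_Y$, the bounded operator $\mathbb{A}^{-1}$ extends to an isometric isomorphism $\mathbb{A}^{-1}\colon Y_{(-1)}\to Y$. Consequently the resolvent of $\mathbb{A}_{(-1)}$ is precisely the continuous $Y_{(-1)}$-extension of the resolvent of $\mathbb{A}$; from this one reads off that $\mathbb{A}_{(-1)}$ is closed and densely defined, that $\sigma(\mathbb{A}_{(-1)})=\sigma(\mathbb{A})$, and that the sectorial resolvent estimates transfer verbatim. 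This single observation drives items (i)--(iii).

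For item (i), sectoriality is immediate from the resolvent transfer above. To obtain the explicit domain I would compute the graph closure directly: a triple $(u,v,w)$ lies in $D(\mathbb{A}_{(-1)})$ precisely when it is a $Y_{(-1)}$-limit of a sequence in $D(\mathbb{A})$ along which $\mathbb{A}(\cdot)$ also converges in $Y_{(-1)}$. Unwinding the action \eqref{OpA001} and the change of variables that identifies $Y_{(-1)}$ with a product of fractional-power scales, the only genuine constraint that survives concerns the combination $\beta v+\gamma u+\delta w$; I would phrase it through the scalar extrapolated operator $A_{(-1)}\colon X^{\frac12}\to X^{-\frac12}$, requiring $\beta v+\gamma u+\delta w\in D(A_{(-1)})$ together with $A(\beta v_{n}+\gamma u_{n}+\delta w_{n})\to A_{(-1)}(\beta v+\gamma u+\delta w)$ in $X^{-\frac12}$. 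This is the route to the stated description of $D(\mathbb{A}_{(-1)})$.

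Item (ii) then follows because the spectrum is preserved, so it suffices to locate $\sigma(\mathbb{A})$: under the standing hypothesis $\gamma/(\alpha+\delta\lambda_0)<\beta$ the stability condition of item (i) of the previous theorem is met, so $e^{-\mathbb{A}t}$ is exponentially stable, whence the spectral bound of $-\mathbb{A}$ is strictly negative and $\Re\sigma(\mathbb{A})>0$. Item (iii) is then routine: since $\mathbb{A}$ is sectorial with $\Re\sigma(\mathbb{A})>0$, the semigroup decays exponentially, so the Bochner integral defining $\mathbb{A}^{-\alpha}$ converges in $L(Y)$; injectivity, the additivity law $\mathbb{A}^{-\alpha}\mathbb{A}^{-\beta}=\mathbb{A}^{-(\alpha+\beta)}$, and the construction of $\mathbb{A}^{\alpha}$ and of $Y^{\alpha}=D(\mathbb{A}^{\alpha})$ follow verbatim from the classical development for positive sectorial operators (Henry, Pazy).

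The step I expect to be the main obstacle is item (iv). Here I would write $\mathbb{A}_{(-1)}^{-1}=\iota\circ\mathbb{A}^{-1}$, with $\mathbb{A}^{-1}\colon Y_{(-1)}\to Y$ the isomorphism above and $\iota\colon Y\hookrightarrow Y_{(-1)}$ the canonical dense embedding, so that compactness of the resolvent reduces to compactness of $\iota$. The subtlety is that $\mathbb{A}$ itself does \emph{not} have compact resolvent, so the required compactness must be created purely by the extrapolation norm. I would therefore identify $Y_{(-1)}$ explicitly via the change of variables suggested by the formula for $\mathbb{A}^{-1}$, and try to read off compactness of $\iota$ factor by factor from the compact Sobolev embeddings $X^{s}\hookrightarrow X^{t}$ (for $s>t$) furnished by the compact resolvent of the scalar operator $A$. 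The delicate factor is the first (displacement) component $u$, which carries the same fractional regularity in $Y$ and in $Y_{(-1)}$; making sure that this component, too, gains genuine compactness --- rather than reducing to the identity on $X^{\frac12}$ --- is exactly the point that must be handled with care and is where I expect the real work of the proof to lie.
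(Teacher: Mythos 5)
For items (i)--(iii) your route is essentially the paper's own: the paper also characterizes $D(\operA_{(-1)})$ by unwinding the graph closure (with the surviving constraint placed on the combination $\beta v+\gamma u+\delta w$ through the scalar extrapolated operator $A_{(-1)}$), proves sectoriality by transferring the resolvent bound through the isometric isomorphism $\operA^{-1}\colon Y_{(-1)}\to Y$ together with $\rho(\operA)=\rho(\operA_{(-1)})$ quoted from Amann, and settles (ii)--(iii) by citation to Amann and Henry; your explicit derivation of (ii) from exponential stability of $e^{-\operA t}$ under the standing hypothesis $\gamma/(\alpha+\delta\lambda_0)<\beta$ is, if anything, more complete than what the paper writes.

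The genuine gap is item (iv), and it is not a gap that ``handling the first component with care'' can close. Your reduction is sound: since $\operA^{-1}$ extends to an isomorphism $Y_{(-1)}\to Y$, compactness of the resolvent of $\operA_{(-1)}$ is equivalent to compactness of the inclusion $\iota\colon Y\hookrightarrow Y_{(-1)}$. But with $Y=X^{\frac12}\times X^{\frac12}\times X$ and $Y_{(-1)}=X^{\frac12}\times X^{\frac12}\times X^{-\frac12}$, the inclusion is the identity on the first \emph{two} factors (not only the first, as you say); only the third factor $X\hookrightarrow X^{-\frac12}$ gains compactness from the compact resolvent of $A$. Consequently $\iota$ is not compact, and the very counterexample the paper uses in its Remark to show that $\operA$ lacks compact resolvent transfers verbatim to the extrapolated operator: if $\{u_n\}\subset X^{\frac12}$ is bounded with no $X^{\frac12}$-convergent subsequence, then $\{(u_n,0,0)\}$ is bounded in $Y_{(-1)}$, while
\[
\operA_{(-1)}^{-1}\vvector{u_n}{0}{0}=\vvector{-\gamma^{-1}\beta u_n}{u_n}{0}
\]
admits no $Y_{(-1)}$-convergent subsequence, since its second component would have to converge in $X^{\frac12}$. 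This conclusion does not depend on the product identification of $Y_{(-1)}$: working directly with the norm $\|\operA^{-1}\cdot\|_{Y}$, one computes that the second component of $\operA^{-1}\vvector{-\gamma^{-1}\beta u}{u}{0}$ is $-\gamma^{-1}\beta u$, so
\[
\norm{\operA_{(-1)}^{-1}\vvector{u_n-u_m}{0}{0}}_{Y_{(-1)}}\geq \gamma^{-1}\beta\,\|u_n-u_m\|_{X^{\frac12}},
\]
which rules out any Cauchy subsequence. So carrying your plan out honestly \emph{disproves} (iv) rather than proving it. Be aware also that the paper itself gives no argument for (iv): it is dismissed as a ``direct consequence'' of results of Amann and Henry that concern resolvent identities and fractional powers, not compactness. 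Item (iv) should therefore be regarded as unproved in the paper and, by the computation above, false as stated; the ``real work'' you deferred does not exist to be done.
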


\begin{proof}
Firstly note that  
\[
\begin{split}
D(\operA_{(-1)}) = &\left\{ \vvector{u}{v}{w} \in X^{\frac12}\times X^{\frac12}\times X^{\frac12}; \exists \vvector{u_{n}}{v_{n}}{w_{n}}_{n\in \N}\subset D(\operA), 
\vvector{u_{n}}{v_{n}}{w_{n}} \overset{X^{\frac12}\times X^{\frac12} \times X^{\frac12}}{\to}\vvector{u}{v}{w},\right. \\
&\left.
\mbox{ and } A(\beta v_{n}+\gamma u_{n} +\delta w_{n}) \overset{X^{-\frac12}}{\to} A_{(-1)}(\beta v+\gamma u +\delta w) \right\} 
\end{split}.
\]
 Indeed,  $\begin{bmatrix}     u\\ v\\ w \end{bmatrix} \in D\pare{\operA_{(-1)}}$ if, and only if, there exists $\left\{ \vvector{u_{n}}{v_{n}}{w_{n}}\right\}_{n \in \N}\subset D(\mathbb{A})$ such that 
\begin{align}
\norm{\vvector{u}{v}{w}-\vvector{u_n}{v_n}{w_n}}_{Y_{(-1)}} \to 0 \\
\norm{\operA_{(-1)}\vvector{u}{v}{w}-\operA\vvector{u_n}{v_n}{w_n}}_{Y_{(-1)}} \to 0.
\label{convdef}
\end{align}
Denoting $\vvector{\mathcal{U}}{\mathcal{V}}{\mathcal{U}} \doteq \operA_{(-1)}\vvector{u}{v}{w},$ we have that $u_n \overset{X^{\frac12}}{\to} u, v_n \overset{X^{\frac12}}{\to} v, 
$ $w_n \overset{X^{-\frac12}}{\to} w$ and $v_n \overset{X^{\frac12}}{\to} \mathcal{U}, w_n \overset{X^{\frac12}}{\to} \mathcal{V}, $ $A(\beta v_n +\gamma u_n + \delta w_n) \overset{X^{-\frac12}}{\to} -\mathcal{W} -\alpha w.$
Therefore $\mathcal{U} = v, \mathcal{V} = w,$ $\beta v_n + \gamma u_n + \delta w_n \in D(A)$ with $\beta v_n + \gamma u_n + \delta w_n \overset{X^{\frac12}}{\to} \beta v + \gamma u + \delta w$ and $A(\beta v_n + \gamma u_n + \delta w_n) \overset{X^{-1}}{\to} -\mathcal{W} - \alpha w,$ i.e., $\beta v + \gamma u + \delta w \in D\pare{A_{(-1)}}$ with $A_{(-1)} \pare{\beta v + \gamma u + \delta w} = -\mathcal{W} - \alpha w$ hence 
$$
\operA_{(-1)}\vvector{u}{v}{w} = \vvector{\mathcal{U}}{\mathcal{V}}{\mathcal{W}} = \vvector{v}{w}{-\alpha w - A_{(-1)}\pare{\beta v + \gamma u + \delta w}}.
$$

On the other hand, let $\vvector{u}{v}{w} \in X^{\frac12}\times X^{\frac12}\times X^{\frac12}$ such that there exists $\vvector{u_{n}}{v_{n}}{w_{n}}_{n\in \N}\subset X^{\frac12}\times X^{\frac12} \times X^{\frac12}$ with $\vvector{u_{n}}{v_{n}}{w_{n}} \overset{X^{\frac12}\times X^{\frac12} \times X^{\frac12}}{\to}\vvector{u}{v}{w},$ $\beta v_{n}+\gamma u_{n} + \delta w_{n} \in D(A)$ and $A(\beta v_{n}+\gamma u_{n} + \delta w_{n}) \overset{X^{-\frac12}}{\to} A_{(-1)}(\beta v+\gamma u + \delta w).$ Then $\vvector{u_{n}}{v_{n}}{w_{n}} \overset{Y_{(-1)}}{\to}\vvector{u}{v}{w} $ and $$\operA\vvector{u_{n}}{v_{n}}{w_{n}} = \vvector{v_{n}}{w_{n}}{-\alpha w_{n} -A(\beta v_n +\gamma u_n +\delta w_n)} \overset{Y_{(-1)}}{\to}\vvector{v}{w}{-\alpha w-A_{(-1)}(\beta v + \gamma u + \delta w)} =\operA_{(-1)}\vvector{u}{v}{w},$$
i.e., $\vvector{u}{v}{w} \in D(\operA_{(-1)}).$

Thanks to \cite[Pg. 262]{A} we have $\rho(\operA) = \rho(\operA_{(-1)})$ and, since $Y^{1}$ is dense in $Y,$ $\operA_{(-1)}$ is a continuous extension of $\operA$ onto $Y$ such that $\operA_{(-1)}$ is an isometric isomorphism from $Y$ onto $Y_{(-1)}.$ 
Therefore, for each $\begin{bmatrix}     u\\ v\\ w \end{bmatrix} \in Y$ 
\[
\begin{split}
\norm{(\lambda - \operA_{(-1)})^{-1} \begin{bmatrix}
           u \\
           v \\
           w
         \end{bmatrix}}_{Y_{(-1)}} &= \norm{\operA^{-1}(\lambda - \operA_{(-1)})^{-1} \begin{bmatrix}
           u \\
           v \\
           w
         \end{bmatrix}}_{Y},
\end{split}
\]
and consequently
\[
\begin{split}
\norm{\operA^{-1}(\lambda - \operA)^{-1} \begin{bmatrix}
           u \\
           v \\
           w
         \end{bmatrix}}_{Y}  & = \norm{(\lambda - \operA)^{-1}\operA^{-1} \begin{bmatrix}
           u \\
           v \\
           w
         \end{bmatrix}}_{Y}\\
         &\leq \frac C\lambda \norm{\operA^{-1} \begin{bmatrix}
           u \\
           v \\
           w
         \end{bmatrix}}_{Y}  \\
         &\leq \frac C\lambda \norm{ \begin{bmatrix}
           u \\
           v \\
           w
         \end{bmatrix}}_{Y_{-1}},
\end{split}
\]
    where $|\lambda|\norm{(\lambda -\operA)^{-1}} \leq \frac C \lambda, \lambda \in \rho(\operA),$ because $\operA$ is sectorial.

    The above inequality can be extended to the elements of $Y_{(-1)}$ by density, thus $\operA_{(-1)}$ is sectorial. 

The remaining assertion is the direct consequence of the results reported  in \cite[Lemma 1.3.7 and  Theorem 1.3.8]{A} and 
\cite[Definition 1.4.1, Theorem 1.4.2, Definition 1.4.1, {\textit{continued}}]{H}. The proof is complete. $\Box$
\end{proof}

\begin{remark}
Note that the domain of the linear operator $\mathbb{A}_{(-1)}$ in $Y_{(-1)}$ is a cross product, as well as, it is the domain the linear operator $\mathbb{A}$ in $Y$. This not is the case when $\mathbb{A}$ is the strongly damped wave operator as was observed in the papers \cite{carab,CC0,CC,CC1,ViPaMaSqua} and \cite{PaMa}. 
\end{remark}

We shall next study \eqref{MainPro} as a sectorial problem in $Y_{(-1)}$. 

\begin{proposition}
Let $Y_{(-1)}$ denote the extrapolation space of $Y$ generated by $\mathbb{A}$. The following equality holds:
\begin{equation}\label{AADsweF}
Y_{(-1)}=X^{\frac{1}{2}}\times X^{\frac 12} \times X^{-\frac 12}.
\end{equation}
\end{proposition}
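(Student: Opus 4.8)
The plan is to exploit the explicit formula for $\mathbb{A}^{-1}$ recorded in item $(ii)$ of the preceding theorem, together with the fact that $Y_{(-1)}$ is by definition the completion of $(Y,\|\mathbb{A}^{-1}\cdot\|_Y)$. Since $Y=X^{\frac12}\times X^{\frac12}\times X$ is dense in $Z:=X^{\frac12}\times X^{\frac12}\times X^{-\frac12}$ (because $X=X^0$ is dense in $X^{-\frac12}$), it suffices to prove that on the dense subspace $Y$ the extrapolation norm $\|\mathbb{A}^{-1}\cdot\|_Y$ is equivalent to the product norm of $Z$; the two completions then coincide and the claimed equality $Y_{(-1)}=Z$ follows.

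To carry this out I would first write, for $(u,v,w)^{T}\in Y$,
\[
\mathbb{A}^{-1}\vvector{u}{v}{w}=\vvector{-\gamma^{-1}\big(\beta u+(\delta+\alpha A^{-1})v+A^{-1}w\big)}{u}{v},
\]
and expand $\|\mathbb{A}^{-1}(u,v,w)^{T}\|_Y^2$ using the norm of $Y$. The key structural facts are that $A^{-1}$ shifts the fractional scale by one level isometrically, so that $\|A^{-1}w\|_{X^{\frac12}}=\|w\|_{X^{-\frac12}}$ and $\|A^{-1}v\|_{X^{\frac12}}=\|v\|_{X^{-\frac12}}\le c\,\|v\|_{X^{\frac12}}$, consistent with the earlier identification of the scalar extrapolation space. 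Diagonalizing in the orthonormal eigenbasis $\{e_k\}$ of $A$ reduces the whole computation to a family of scalar quadratic forms indexed by the eigenvalues $\lambda_k\ge\lambda_0>0$: writing $u_k,v_k,w_k$ for the Fourier coefficients, the squared extrapolation norm becomes a sum of terms built from $\lambda_k u_k^2$, $v_k^2$ and $\lambda_k\big(\beta u_k+(\delta+\alpha\lambda_k^{-1})v_k+\lambda_k^{-1}w_k\big)^2$, to be compared with $\lambda_k u_k^2+\lambda_k v_k^2+\lambda_k^{-1}w_k^2$, whose sum is $\|(u,v,w)\|_Z^2$.

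The upper bound is routine: each summand of the extrapolation norm is controlled by the $Z$-norm using the triangle inequality together with the continuous inclusions $X^{\frac12}\hookrightarrow X\hookrightarrow X^{-\frac12}$ and the boundedness of $A^{-1}$ between consecutive levels. The main obstacle is the reverse, coercive inequality: one must recover $\|v\|_{X^{\frac12}}$ and $\|w\|_{X^{-\frac12}}$ from the single coupled quantity $\beta u+(\delta+\alpha A^{-1})v+A^{-1}w$ after the $u$-contribution has been split off, i.e. show that the reduced quadratic forms are uniformly (in $k$) positive definite relative to $\mathrm{diag}(\lambda_k,\lambda_k,\lambda_k^{-1})$. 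This uniformity in the high-frequency regime $\lambda_k\to\infty$ is precisely the delicate point, since the $v$--contribution to the extrapolation norm is measured at a lower level than in $Z$; it is here that the positivity of $A$ (the lower bound $\lambda_k\ge\lambda_0>0$) and of the constants $\alpha,\beta,\gamma,\delta$, together with the standing hypothesis $\gamma/(\alpha+\delta\lambda_0)<\beta$, must be brought to bear to prevent the coercivity constant from degenerating. Once the uniform two-sided bound is secured, summation over $k$ yields the equivalence of norms on $Y$, and the density argument from the first paragraph closes the proof.
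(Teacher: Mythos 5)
Your overall route is the same as the paper's: realize $Y_{(-1)}$ as the completion of $(Y,\norm{\mathbb{A}^{-1}\cdot}_Y)$, use the explicit formula for $\mathbb{A}^{-1}$, prove a two-sided norm estimate on the dense subspace $Y$, and conclude that the two completions coincide. Your upper bound is correct (it is exactly the first half of the paper's argument), and the reduction by density is sound. The genuine gap is the reverse, coercive inequality, which you only promise to prove (``must be brought to bear \dots once the uniform two-sided bound is secured''). That estimate is not merely delicate: it is false, so this step cannot be completed by any choice of constants.

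To see this, work at a single frequency. Fix $k$ and set $u_k=0$, $v_k=e_k$, $w_k=-(\alpha+\delta\lambda_k)e_k\in X$, so that $(u_k,v_k,w_k)\in Y$. Then $\alpha v_k+w_k=-\delta\lambda_k e_k$, hence
\[
\beta u_k+\delta v_k+A^{-1}(\alpha v_k+w_k)=\delta e_k-\delta e_k=0,
\]
so the first component of $\mathbb{A}^{-1}(u_k,v_k,w_k)$ vanishes identically and
\[
\norm{\mathbb{A}^{-1}(u_k,v_k,w_k)}_Y^2=\norm{v_k}_X^2=1,
\qquad\text{while}\qquad
\norm{(u_k,v_k,w_k)}_{X^{\frac12}\times X^{\frac12}\times X^{-\frac12}}^2
=\lambda_k+\lambda_k^{-1}(\alpha+\delta\lambda_k)^2\ge(1+\delta^2)\lambda_k\to\infty.
\]
Thus no constant $c$ gives $\norm{\cdot}_{X^{\frac12}\times X^{\frac12}\times X^{-\frac12}}\le c\,\norm{\mathbb{A}^{-1}\cdot}_Y$ on $Y$; the uniform positive definiteness of your reduced quadratic forms relative to $\mathrm{diag}(\lambda_k,\lambda_k,\lambda_k^{-1})$ fails exactly in the high-frequency regime you flagged. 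Note that $\beta$ and $\gamma$ do not even enter this example, so neither the positivity of the constants nor the standing hypothesis $\gamma/(\alpha+\delta\lambda_0)<\beta$ (which governs exponential stability, not this norm comparison) can restore coercivity. The mechanism is the one you identified: the extrapolation norm controls $v$ unconditionally only through $\norm{v}_X$ (the third slot of $\mathbb{A}^{-1}$), and the first slot can be annihilated by choosing $w=-\alpha v-\delta Av$. You should also know that the paper's own proof stumbles at the same point: after estimating $\norm{w}_{X^{-\frac12}}$ by the extrapolation norm plus $c_1\norm{u}_{X^{\frac12}}+c_2\norm{v}_{X^{\frac12}}$, it concludes ``hence'' the full lower bound, which tacitly uses $\norm{v}_{X^{\frac12}}\le c\,\norm{\mathbb{A}^{-1}(u,v,w)}_Y$ --- precisely the estimate refuted above. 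So your instinct about where the difficulty sits is accurate, but the honest conclusion is that the norm-equivalence approach (yours and the paper's alike) breaks down there; it is not a matter of finer constant-chasing.
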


\begin{proof}
Recall first that $Y_{(-1)}$ is the completion of the normed space $(Y,\|\mathbb{A}^{-1}\cdot\|_Y)$. 
Note that
\begin{align*}
    &\norm{\gamma^{-1}[\beta u +\delta v+ A^{-1}(\alpha v +w)]}_{X^{\frac{1}{2}}} \leq \\
    &\leq |\gamma|^{-1}\left[|\beta|\norm{u}_{X^{\frac{1}{2}}}+(|\delta| +
    \norm{A^{-1}}_{\mathcal{L}(X)}|\alpha|)\norm{v}_{X^{\frac{1}{2}}} + \norm{w}_{X^{-\frac 12}}\right],
\end{align*}
since
\begin{align*}
    \norm{\operA^{-1}\begin{bmatrix}     u\\ v\\ w \end{bmatrix}}^{2}_{Y}  = \norm{\gamma^{-1}[\beta u +\delta v +A^{-1}(\alpha v +w)]}^{2}_{X^{\frac{1}{2}}} + \norm{u}^{2}_{X^{\frac{1}{2}}} +\norm{v}^{2}_{X} \leq \\
    \leq \norm{\gamma^{-1}[\beta u +\delta v +A^{-1}(\alpha v +w)]}^{2}_{X^{\frac{1}{2}}} + \norm{u}^{2}_{X^{\frac{1}{2}}} +\norm{v}^{2}_{X^{\frac 12}}
\end{align*}
then
\begin{align*}
    \norm{\operA^{-1}\begin{bmatrix}     u\\ v\\ w \end{bmatrix}}_{Y} \leq C \norm{\begin{bmatrix}     u\\ v\\ w \end{bmatrix}}_{X^{\frac{1}{2}} \times X^{\frac 12} \times X^{-\frac 12}}.
\end{align*}

On the other hand, note that 
\begin{align*}
    \norm{w}_{X^{-\frac 12}} &= \norm{A^{\frac 12}A^{-1}w}_{X}  \leq \\
    & \leq \norm{\gamma^{-1}[\beta u +\delta v+ A^{-1}(\alpha v +w)]}_{X^{\frac{1}{2}}} + \norm{\gamma^{-1}[\beta u + \delta v + A^{-1}\alpha v]}_{X^{\frac{1}{2}}}  \\
    &\leq
    \norm{\gamma^{-1}[\beta u +\delta v+ A^{-1}(\alpha v +w)]}_{X^{\frac{1}{2}}} + c_{1}\norm{u}_{X^{\frac{1}{2}}} + c_{2}\norm{v}_{X^{\frac 12}},
\end{align*}
where $c_1$ and $c_2$ are constants.
Hence, given $\begin{bmatrix}     u\\ v\\ w \end{bmatrix} \in Y$
\begin{align*}
    \norm{\begin{bmatrix}     u\\ v\\ w \end{bmatrix}}_{X^{\frac{1}{2}}\times X^{\frac 12} \times X^{-\frac 12}} \leq  c^\prime \norm{\operA^{-1}\begin{bmatrix}     u\\ v\\ w \end{bmatrix}}_{Y}.
\end{align*}

Since
\[
\left\|\mathbb{A}^{-1}\begin{bmatrix}     u\\ v\\ w \end{bmatrix}\right\|_Y\leq C\left\|\begin{bmatrix}     u\\ v\\ w \end{bmatrix}\right\|_{X^{\frac{1}{2}}\times X^{\frac 12} \times X^{-\frac 12}}\leq  C^\prime\left\|\mathbb{A}^{-1}\begin{bmatrix}     u\\ v\\ w \end{bmatrix}\right\|_Y
,\]
where $C^\prime = Cc^\prime,$
for any $\begin{bmatrix}     u\\ v\\ w \end{bmatrix}\in Y$, the completions $(Y,\|\mathbb{A}^{-1}\cdot\|_Y)$ and $\left(Y,\|\cdot\|_{X^{\frac 12}\times X^{\frac 12}\times X^{-\frac 12}}\right)$ coincide. Recalling that $Y=X^{\frac12}\times X^{\frac12}\times X$, we obtain \eqref{AADsweF}. The proof is complete. $\Box$
\end{proof}

Denote by $Y^1_{(-1)}$ the domain $D(\mathbb{A}_{(-1)})$ of the unbounded linear operator $\mathbb{A}_{(-1)}$ given by $\eqref{domaindef}$
with the norm $\|\mathbb{A}\cdot \|_{Y_{(-1)}}$, where
\begin{equation}
\forall \begin{bmatrix}     u\\ v\\ w \end{bmatrix}\in  D(\mathbb{A}_{(-1)}),\quad \mathbb{A}_{(-1)}\begin{bmatrix}     u\\ v\\ w \end{bmatrix}=\begin{bmatrix}v\\ w\\ -\alpha w-A_{-1}(\beta v+\gamma u+\delta w)\end{bmatrix}
\end{equation}

Denote by $Y_{(-1)}^\alpha$ the domain of $\mathbb{A}^\alpha_{(-1)}$ for some $\alpha\geq0$ with graph norm.  We obtain the fractional power scale spaces associated with $\mathbb{A}^\alpha_{(-1)}$, see \cite{H}.

The Figure \ref{LaStFiGure} illustrates some of the information we now have.
\begin{figure}[!htp]
\centering
\begin{tikzpicture}
\draw[-latex] (-4.5,1.2)node[left]{$D(\mathbb{A})$} -- (0.95,1.2)node[right]{$Y=X^{\frac12}\times X^{\frac12}\times X$};
\draw[-latex] (-2,-0.5)node[left]{$Y_{(-1)}^1=D(\mathbb{A}_{(-1)})$} -- (-1.5,-0.5)node[right]{$Y_{(-1)}^\alpha$};
\draw[-latex] (-0.4,-0.5) -- (0.5,-0.5)node[right]{$Y_{(-1)}^{\alpha_0}$};
\draw[-latex] (1.6,-0.5)-- (4,-0.5)node[right]{$Y_{(-1)}=X^\frac12\times X^{\frac{1}{2}} \times X^{-\frac 12}$};
\end{tikzpicture}
\caption{Partial description of the fractional power spaces scales}\label{LaStFiGure}
\end{figure}

\section{Local well posedness with subcritical nonlinearities}\label{MainSection}

Finally, in this section, we prove a local solvability result for the problem \eqref{MainProExtScl} in the space $Y_{(-1)}^\alpha$ for $\alpha_0\leq\alpha<1$, for some $0<\alpha_0<1$ sufficiently near to 1 such that 
\[
Y_{(-1)}^1\hookrightarrow Y_{(-1)}^{\alpha_0}\hookrightarrow Y.
\]
The existence of such $\alpha_0$   is assured by the continuous embedding from fractional powers theory that asserts 
\[
Y_{(-1)}^\beta\hookrightarrow Y_{(-1)}^\gamma
\]
for $0\leq\gamma\leq\beta\leq1$.
  Initially, we collect some properties of the nonlinearities $f$ and $\mathbb{F}$. We observe that \eqref{NovaEstGGDDDima} implies that, thanks to the Mean Value Theorem,
\begin{equation}\label{NovaEstima}
|f_i(s_1)-f_i(s_2)|\leq c(1+|s_1|^{\varrho-1}+|s_2|^{\varrho-1})|s_1-s_2|        
\end{equation}
for any $s_1,s_2\in\mathbb{R}$, $i=1,2$, for some $c>0$.

\begin{lemma}\label{LemmAUXIL}
Let $f$ be a twice continuously differentiable function such that the growth conditions \eqref{CondNonLinf} and \eqref{NovaEstGGDDDima} hold. Then 
\begin{equation}\label{Conf45Tef3}
\|f_i(u_1)-f_i(u_2)\|_{L^{\frac{2N}{N+2m}}(\Omega)}\leq c(1+\|u_1\|_{H^m(\Omega)}^{\varrho-1}+\|u_2\|_{H^m(\Omega)}^{\varrho-1})\|u_1-u_2\|_{H^m(\Omega)},
\end{equation}
for any  $u_1,u_2 \in H^m(\Omega)$, and $i=1,2$.
\end{lemma}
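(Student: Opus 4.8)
The plan is to reduce the $L^{2N/(N+2m)}$ estimate to the pointwise inequality \eqref{NovaEstima} followed by a single application of Hölder's inequality, with the exponents dictated precisely by the subcriticality threshold $\varrho\leq\frac{N+2m}{N-2m}$. First I would invoke \eqref{NovaEstima}, which already follows from \eqref{NovaEstGGDDDima} via the Mean Value Theorem, to write
$$
|f_i(u_1(x))-f_i(u_2(x))| \leq c\bigl(1+|u_1(x)|^{\varrho-1}+|u_2(x)|^{\varrho-1}\bigr)|u_1(x)-u_2(x)|
$$
for a.e.\ $x\in\Omega$. Taking $L^{p}$ norms with $p=\frac{2N}{N+2m}$ and splitting the right-hand side into the three summands $|u_1-u_2|$, $|u_1|^{\varrho-1}|u_1-u_2|$ and $|u_2|^{\varrho-1}|u_1-u_2|$ reduces the lemma to estimating each summand in $L^{p}(\Omega)$.

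The central computation is the choice of Hölder exponents. Writing $2^{\ast}=\frac{2N}{N-2m}$ for the critical Sobolev exponent of $H^m(\Omega)$ (available because $N>2m$), I would apply Hölder's inequality to each product $|u_i|^{\varrho-1}|u_1-u_2|$ with conjugate exponents $a=\frac{N}{2m}$ and $b=2^{\ast}$; one checks directly that $\frac1a+\frac1b=\frac{2m}{N}+\frac{N-2m}{2N}=\frac{N+2m}{2N}=\frac1p$, so the pairing is admissible. This yields
$$
\bigl\||u_i|^{\varrho-1}|u_1-u_2|\bigr\|_{L^{p}} \leq \bigl\||u_i|^{\varrho-1}\bigr\|_{L^{N/2m}}\,\|u_1-u_2\|_{L^{2^{\ast}}} = \|u_i\|_{L^{(\varrho-1)N/2m}}^{\varrho-1}\,\|u_1-u_2\|_{L^{2^{\ast}}}.
$$

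Then I would close the estimate using the Sobolev embedding $H^m(\Omega)\hookrightarrow L^{q}(\Omega)$, valid for every $q\in[1,2^{\ast}]$, together with the boundedness of $\Omega$. The factor $\|u_1-u_2\|_{L^{2^{\ast}}}$ is bounded by $C\|u_1-u_2\|_{H^m(\Omega)}$ at once. For the growth factor, the key point---and the only place where the hypothesis on $\varrho$ is used---is the verification that the integrability exponent $(\varrho-1)\frac{N}{2m}$ does not exceed $2^{\ast}$: the inequality $(\varrho-1)\frac{N}{2m}\leq\frac{2N}{N-2m}$ is equivalent to $\varrho\leq\frac{N+2m}{N-2m}$, which is exactly the range in \eqref{NovaEstGGDDDima}. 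Hence $\|u_i\|_{L^{(\varrho-1)N/2m}}\leq C\|u_i\|_{H^m(\Omega)}$, while the remaining summand satisfies $\|u_1-u_2\|_{L^{p}}\leq C\|u_1-u_2\|_{L^{2^{\ast}}}\leq C\|u_1-u_2\|_{H^m(\Omega)}$ since $p\leq 2^{\ast}$ on the bounded domain $\Omega$, and the constant term ``$1$'' contributes the finite factor $|\Omega|^{2m/N}$. Collecting the three pieces gives \eqref{Conf45Tef3}.

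The only genuine obstacle is the arithmetic bookkeeping: one must confirm that a single pair of Hölder exponents simultaneously matches the target exponent $p$ and keeps the growth factor within the range of the Sobolev embedding, and that both constraints degenerate exactly at the critical exponent $\varrho=\frac{N+2m}{N-2m}$. No compactness or finer harmonic-analysis tools are needed; everything rests on the embedding $H^m(\Omega)\hookrightarrow L^{2^{\ast}}(\Omega)$ and the finiteness of $|\Omega|$.
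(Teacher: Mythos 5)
Your proof is correct and follows essentially the same route as the paper's: the pointwise bound \eqref{NovaEstima}, a single H\"older application with conjugate exponents $\frac{N}{2m}$ and $\frac{2N}{N-2m}$ relative to $\frac{2N}{N+2m}$, and the Sobolev embeddings $H^m(\Omega)\hookrightarrow L^{\frac{N(\varrho-1)}{2m}}(\Omega)$ and $H^m(\Omega)\hookrightarrow L^{\frac{2N}{N-2m}}(\Omega)$, with subcriticality of $\varrho$ entering exactly where you place it. In fact your write-up is cleaner than the paper's display, which contains typographical slips (e.g.\ $\|u_1+u_2\|$ where $\|u_1-u_2\|$ is meant).
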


\begin{proof}
From \eqref{NovaEstima}, the H\"{o}lder inequality, and the Sobolev embeddings we obtain
\[
\begin{split}
\|f_i(u_1)-f_i(u_2)\|_{L^{\frac{2N}{N+2m}}(\Omega)}&\leq c\left[\int_\Omega(1+|u_1|^{\varrho_i-1}+|u_2|^{\varrho_i-1})^{\frac{2N}{N+2m}}|u_1-u_2|^{\frac{2N}{N+2m}}dx\right]^{\frac{N+2m}{2N}}\\
 &\leq c|1+\|1+|u_1|^{\varrho_i-1}+|u_2|^{\varrho_i-1}\|_{L^{\frac{N}{2m}}(\Omega)}\|u_1+u_2\|_{L^{\frac{2N}{N-2m}}(\Omega)}\\
&\leq C\left(1+\|1+\|u_1\|_{L^{\frac{N(\varrho_1-1)}{2m}}(\Omega)}^{\varrho_i-1}+\|u_2\|_{L^{\frac{N(\varrho_1-1)}{2m}}(\Omega)}^{\varrho_i-1}\right)\|u_1+u_2\|_{L^{\frac{2N}{N-2m}}(\Omega)}.
\end{split}
\]
Since
\[
\dfrac{2N(\varrho_i-1)}{4m}\leq \dfrac{2N}{N-2m}
\]
we have 
\[
L^{\frac{N(\varrho_i-1)}{2m}}(\Omega)\supset H^m(\Omega).
\]
for any $i=1,2$. 

We observe that if $\varrho_i=\frac{N+2m}{N-2m}$, then $L^{\frac{N(\varrho_i-1)}{2m}}(\Omega)=L^{\frac{4mN}{N-2m}}(\Omega)$, and $(L^{\frac{4mN}{N-2m}}(\Omega))^*=L^{\frac{2N}{N+2m}}(\Omega)$.
This completes the proof. $\Box$
\end{proof}

\bigskip

This proves the following result.

\begin{lemma}
Let $f$ be a twice continuously differentiable function such that the growth conditions \eqref{CondNonLinf} and \eqref{NovaEstGGDDDima} hold.   Then  $\mathbb{F} : Y_{(-1)}^\alpha \to Y_{(-1)}$, the nonlinearity given in \eqref{NonLinF}, is Lipschitz continuous in bounded subsets of $Y_{(-1)}^\alpha$ for any $\alpha_0\leq\alpha<1$. 
\end{lemma}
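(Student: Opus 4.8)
The plan is to exploit the product structure together with the continuous embedding $Y_{(-1)}^\alpha\hookrightarrow Y=X^{\frac12}\times X^{\frac12}\times X$, which holds for $\alpha_0\le\alpha<1$ by the scale inclusions recorded above. Thus every $\mathbf{u}=(u,v,w)^{\top}\in Y_{(-1)}^\alpha$ has components $u,v\in X^{\frac12}=H^m(\Omega)$ and $w\in X=L^2(\Omega)$, with all three norms controlled by $\|\mathbf{u}\|_{Y_{(-1)}^\alpha}$. Since the nonlinearity \eqref{NonLinF} is nonzero only in its last slot and the $Y_{(-1)}$-norm of its image is precisely $\|f(u,v,w)\|_{X^{-\frac12}}$, the whole problem reduces to estimating $f_1(u_1)-f_1(u_2)$, $f_2(v_1)-f_2(v_2)$ and $f_3(w_1)-f_3(w_2)$ in the norm of $X^{-\frac12}=H^{-m}(\Omega)$.

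First I would record the two embeddings that transport the pointwise estimates into the target space. Dualizing the Sobolev embedding $H^m(\Omega)\hookrightarrow L^{\frac{2N}{N-2m}}(\Omega)$ gives $L^{\frac{2N}{N+2m}}(\Omega)\hookrightarrow H^{-m}(\Omega)=X^{-\frac12}$, while the scale inclusion $X=X^0\hookrightarrow X^{-\frac12}$ gives $L^2(\Omega)\hookrightarrow X^{-\frac12}$. Applying Lemma \ref{LemmAUXIL} to $f_1,f_2$ and then the first embedding yields
\[
\|f_i(\phi_1)-f_i(\phi_2)\|_{X^{-\frac12}}\le C\bigl(1+\|\phi_1\|_{H^m(\Omega)}^{\varrho-1}+\|\phi_2\|_{H^m(\Omega)}^{\varrho-1}\bigr)\|\phi_1-\phi_2\|_{H^m(\Omega)},\quad i=1,2,
\]
and for $f_3$, global Lipschitz continuity together with the second embedding gives $\|f_3(w_1)-f_3(w_2)\|_{X^{-\frac12}}\le C\,\|w_1-w_2\|_{L^2(\Omega)}$.

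Finally I would localize to a bounded set. On $\{\mathbf{u}\in Y_{(-1)}^\alpha:\|\mathbf{u}\|_{Y_{(-1)}^\alpha}\le R\}$ the embedding $Y_{(-1)}^\alpha\hookrightarrow Y$ bounds $\|u\|_{H^m(\Omega)},\|v\|_{H^m(\Omega)}\le CR$, so the growth prefactors $1+\|\cdot\|_{H^m(\Omega)}^{\varrho-1}$ are dominated by a single constant $L(R)$. Adding the three estimates and bounding $\|u_1-u_2\|_{H^m(\Omega)}$, $\|v_1-v_2\|_{H^m(\Omega)}$, $\|w_1-w_2\|_{L^2(\Omega)}$ by $C\|\mathbf{u}_1-\mathbf{u}_2\|_{Y_{(-1)}^\alpha}$ produces the desired bounded-Lipschitz estimate $\|\mathbb{F}(\mathbf{u}_1)-\mathbb{F}(\mathbf{u}_2)\|_{Y_{(-1)}}\le L(R)\|\mathbf{u}_1-\mathbf{u}_2\|_{Y_{(-1)}^\alpha}$.

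I expect the only genuinely delicate point to be the very choice of target space rather than any computation. Since $\frac{2N}{N+2m}<2$, on the bounded domain $\Omega$ one has $L^{\frac{2N}{N+2m}}(\Omega)\supsetneq L^2(\Omega)$, so $f_1(u),f_2(v)$ need not lie in $X=L^2(\Omega)$ and $\mathbb{F}$ cannot be realized as a map into $Y$. It is precisely the passage to the extrapolation space $Y_{(-1)}$, whose last factor is $X^{-\frac12}=H^{-m}(\Omega)\supset L^{\frac{2N}{N+2m}}(\Omega)$, that accommodates the subcritical growth; checking that this embedding absorbs the output of Lemma \ref{LemmAUXIL} is the crux, and everything else is a routine bounded-set localization.
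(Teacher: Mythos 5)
Your proposal is correct and follows essentially the same route as the paper's proof: decompose $f=f_1+f_2+f_3$, reduce the $Y_{(-1)}$-norm to the $X^{-\frac12}$-norm of the third component, push $f_1,f_2$ through Lemma \ref{LemmAUXIL} via the embedding $L^{\frac{2N}{N+2m}}(\Omega)\hookrightarrow X^{-\frac12}$ and $f_3$ through $L^2(\Omega)\hookrightarrow X^{-\frac12}$, then control all component norms by $\|\cdot\|_{Y_{(-1)}^\alpha}$. Your explicit justification of the dual Sobolev embedding and the bounded-set localization only makes explicit what the paper leaves implicit.
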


\begin{proof}
Let  $\alpha_0\leq\alpha<1$. For each $\begin{bmatrix}     u_1\\ v_1\\ w_1 \end{bmatrix},\begin{bmatrix}     u_2\\ v_2\\ w_2 \end{bmatrix}\in Y_{(-1)}^\alpha$, we have
\[
\begin{split}
\left\|\mathbb{F}(\begin{bmatrix}     u_1\\ v_1\\ w_1 \end{bmatrix})-\mathbb{F}(\begin{bmatrix}     u_2\\ v_2\\ w_2 \end{bmatrix})\right\|_{Y_{(-1)}}&=\|f(u_1,v_1,w_1)-f(u_2,v_2,w_2)\|_{X^{-\frac12}}\\
&\leq\|f_1(u_1)-f_1(u_2)\|_{X^{-\frac12}}\\
&+\|f_2(v_1)-f_2(v_2)\|_{X^{-\frac12}}\\
&+\|f_3(w_1)-f_3(w_2)\|_{X^{-\frac12}}\\
&\leq c\|f_1(u_1)-f_1(u_2)\|_{L^{\frac{2N}{N+2m}}(\Omega)}\\
&+c\|f_2(v_1)-f_2(v_2)\|_{L^{\frac{2N}{N+2m}}(\Omega)}\\
&+c\|f_3(w_1)-f_3(w_2)\|_{L^2(\Omega)}\\
\end{split}
\]
for some $c>0$.

Thanks to the Lemma \ref{LemmAUXIL} we obtain
\[
\begin{split}
\left\|\mathbb{F}(\begin{bmatrix}     u_1\\ v_1\\ w_1 \end{bmatrix})-\mathbb{F}(\begin{bmatrix}     u_2\\ v_2\\ w_2 \end{bmatrix})\right\|_{Y_{(-1)}}
&\leq  c(1+\|u_1\|_{H^m(\Omega)}^{\varrho-1}+\|u_2\|_{H^m(\Omega)}^{\varrho-1})\|u_1-u_2\|_{H^m(\Omega)}\\
&+ c(1+\|v_1\|_{H^m(\Omega)}^{\varrho-1}+\|v_2\|_{H^m(\Omega)}^{\varrho-1})\|v_1-v_2\|_{H^m(\Omega)}\\
&+c\|w_1-w_2\|_{L^2(\Omega)}\\
&\leq c\left(1+\left\|\begin{bmatrix}     u_1\\ v_1\\ w_1 \end{bmatrix}\right\|^{\varrho-1}_{Y_{(-1)}^\alpha}+\left\|\begin{bmatrix}     u_2\\ v_2\\ w_2 \end{bmatrix}\right\|^{\varrho-1}_{Y_{(-1)}^\alpha}\right)\left\|\begin{bmatrix}     u_1\\ v_1\\ w_1 \end{bmatrix}-\begin{bmatrix}     u_2\\ v_2\\ w_2 \end{bmatrix}\right\|_{Y_{(-1)}^\alpha},
\end{split}
\]
for some $c>0$. $\Box$    
\end{proof}

\begin{definition}\label{NewSenDefSol}
Let $\mathbb{A}$ be the unbounded linear operator defined by \eqref{OpA001}-\eqref{OpA002}, and $\alpha_0\leq\alpha<1$. For the continuous function $\mathbb{F} : Y_{(-1)}^\alpha \to Y_{(-1)}$, the function
    $ {\bf u}(\cdot, {\bf u}_0): [0, T_0) \to Y_{(-1)}^\alpha$ is a solution for the problem
\begin{equation}\label{MainProExtScl}
\begin{cases}
\dfrac{d{\bf u}}{dt}+\mathbb{A}_{(-1)}{\bf u}=\mathbb{F}({\bf u}),\ t>0,\\
{\bf u}(0)={\bf u}_0,
\end{cases}
\end{equation}
if it is continuous, continuously differentiable in $(0, T_0), {\bf u}(t,  {\bf u}_0) \in D(\mathbb{A}_{(-1)})$ for $t \in (0, T_0)$ and \eqref{abstrcauchyp} is satisfied for all $t \in (0, T_0)$.
\end{definition}

\begin{theorem}\label{DtTheoremmm}
Let $f$ be a twice continuously differentiable function such that the growth conditions \eqref{CondNonLinf} and \eqref{NovaEstGGDDDima} hold.  Let $\mathbb{A}$ be the unbounded linear operator defined by \eqref{OpA001}-\eqref{OpA002}, and let  $\mathbb{F} : Y_{(-1)}^\alpha \to Y_{(-1)}$ the nonlinearity given in \eqref{NonLinF} for $\alpha_0\leq\alpha<1$. Then, given $r > 0$ there exists a $T_0 > 0$ and for each ${\bf u}_0 \in Y_{(-1)}^\alpha$ with $\|{\bf u}_0\|_{Y_{(-1)}^\alpha} \leq r$ a function 
\[
{\bf u}(\cdot,  {\bf u}_0) \in C([0, T_0], Y_{(-1)}^\alpha) \cap C^{1}((0, T_0], Y_{(-1)}^\alpha)
\]
with the property that
    $$\left\{{\bf u}_0 \in Y_{(-1)}^\alpha: \|{\bf u}_0\|_{Y_{(-1)}^\alpha}\ \leq r\right\} \ni z_0 \mapsto {\bf u}(\cdot,  {\bf u}_0) \in C([0,  T_0], Y_{(-1)}^\alpha)$$
is continuous; $ {\bf u}(\cdot, {\bf u}_0)$ is the unique solution of \eqref{MainProExtScl} (in the sense of Definition \ref{NewSenDefSol}) for $\alpha_0\leq\alpha<1$. If ${\bf u}_0\in X^1\times X^1\times X^1$, then ${\bf u}(\cdot)$ is twice continuously differentiable in $(0,T)$ with values in $X^{\frac12}\times X^{\frac12}\times X$. 
\end{theorem}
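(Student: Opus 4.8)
The plan is to read off the existence, uniqueness and continuous–dependence assertions directly from the abstract result Theorem~\ref{existenceofsolution}, and then to obtain the final regularity claim for smooth data by a parabolic bootstrap that exploits the analyticity of the semigroup generated by $-\mathbb{A}_{(-1)}$ together with the companion–matrix structure of $\mathbb{A}$.

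First I would invoke Theorem~\ref{existenceofsolution} with $Z=Y_{(-1)}$, $B=\mathbb{A}_{(-1)}$, $F=\mathbb{F}$ and $\eta=\alpha\in[\alpha_0,1)\subset[0,1)$. Both hypotheses are already available: Theorem~\ref{TSerFRef}$(i)$ gives that $\mathbb{A}_{(-1)}$ is sectorial in $Y_{(-1)}$, and the preceding lemma shows that $\mathbb{F}:Y_{(-1)}^\alpha\to Y_{(-1)}$ is Lipschitz continuous on bounded subsets of $Y_{(-1)}^\alpha$ for every $\alpha_0\le\alpha<1$. Theorem~\ref{existenceofsolution} then furnishes, for each $r>0$, a uniform existence time $T_0>0$, the solution ${\bf u}(\cdot,{\bf u}_0)\in C([0,T_0],Y_{(-1)}^\alpha)\cap C^1((0,T_0],Y_{(-1)}^\alpha)$, its uniqueness in the sense of Definition~\ref{NewSenDefSol}, and the continuous dependence on the datum ${\bf u}_0$. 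This settles every assertion except the final regularity statement.

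For the regularity claim I would begin from the variation–of–constants representation ${\bf u}(t)=e^{-\mathbb{A}_{(-1)}t}{\bf u}_0+\int_0^t e^{-\mathbb{A}_{(-1)}(t-s)}\mathbb{F}({\bf u}(s))\,ds$. Since ${\bf u}\in C^1((0,T_0],Y_{(-1)}^\alpha)$ and $\mathbb{F}$ is locally Lipschitz, the map $s\mapsto\mathbb{F}({\bf u}(s))$ is locally Lipschitz, hence locally Hölder continuous, on $(0,T_0]$; the standard regularity theory for analytic semigroups (\cite[§1.4]{H}) then promotes the mild solution to a classical one, so that ${\bf u}(t)\in D(\mathbb{A}_{(-1)})$ for $t>0$ and $\dot{\bf u}=-\mathbb{A}_{(-1)}{\bf u}+\mathbb{F}({\bf u})$ holds pointwise in $Y_{(-1)}$. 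To improve the values from $Y_{(-1)}$ to $Y=X^{\frac12}\times X^{\frac12}\times X$ I would use ${\bf u}_0\in X^1\times X^1\times X^1\subset D(\mathbb{A})$ and the consistency of the realizations $\mathbb{A}$ and $\mathbb{A}_{(-1)}$ (their semigroups agree on $Y$): the orbit then stays in the $Y$-domain, ${\bf u}(t)\in D(\mathbb{A})$, whence $\beta v+\gamma u+\delta w\in X^1$, $\mathbb{A}{\bf u}(t)\in Y$, and, since $\mathbb{F}({\bf u}(t))\in\{0\}\times\{0\}\times X$, also $\dot{\bf u}(t)\in Y$, i.e. ${\bf u}\in C^1((0,T),Y)$. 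The upper–triangular companion structure is decisive here: the first two coordinates are, up to sign, the first and second time–derivatives of the leading one, so once the parabolic third coordinate is controlled in $X$ the time–regularity transfers to the first two coordinates in $X^{\frac12}$.

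Finally, to reach the second derivative I would differentiate the equation. Because each $f_\ell\in C^2$, the Nemytskii map $\mathbb{F}$ is continuously differentiable between the relevant fractional spaces, and since ${\bf u}\in C^1$ the inhomogeneity $t\mapsto\mathbb{F}({\bf u}(t))$ is $C^1$ with (locally Hölder) derivative $D\mathbb{F}({\bf u})\dot{\bf u}$; applying the same analytic–semigroup regularity to the linear variational equation $\dot{\bf p}=-\mathbb{A}{\bf p}+D\mathbb{F}({\bf u}){\bf p}$ satisfied by ${\bf p}=\dot{\bf u}$ yields ${\bf p}\in C^1((0,T),Y)$, that is ${\bf u}\in C^2((0,T),Y)$. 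I expect the genuine obstacle to be exactly this last upgrade: tracking, along the bootstrap, the smoothing on the fractional scale so that the output of the elliptic operator $A$ lands in $X$ and not merely in $X^{-\frac12}$. This is what forces the use of the regular data ${\bf u}_0\in X^1\times X^1\times X^1$ (to keep the orbit in $D(\mathbb{A})$), of the companion structure (to propagate regularity across coordinates), and of the subcritical growth \eqref{NovaEstGGDDDima} (to make the Nemytskii nonlinearity differentiable with values in $X$).
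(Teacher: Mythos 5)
Your first step---invoking Theorem \ref{existenceofsolution} with $Z=Y_{(-1)}$, $B=\mathbb{A}_{(-1)}$, $\eta=\alpha$, sectoriality from Theorem \ref{TSerFRef} and the local Lipschitz property of $\mathbb{F}$ from the preceding lemma---is exactly the paper's argument and correctly settles everything except the final regularity claim. For that final claim, however, your bootstrap has a genuine gap, and it is precisely the point you flagged at the end but did not resolve: the step ``since $\mathbb{F}({\bf u}(t))\in\{0\}\times\{0\}\times X$, also $\dot{\bf u}(t)\in Y$'' requires $f_1(u(t))\in L^2(\Omega)$, i.e. $u(t)\in L^{2\varrho}(\Omega)$, and this is not available. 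The fractional power scale of $\mathbb{A}_{(-1)}$ never improves the regularity of the \emph{individual} first component beyond $X^{\frac12}=H^m(\Omega)$: the domains $D(\mathbb{A})$ and $D(\mathbb{A}_{(-1)})$ are cross products in which only the combination $\beta v+\gamma u+\delta w$ gains smoothness, while $u$ itself stays in $X^{\frac12}$ (this is exactly difficulty $i)$ emphasized in the introduction and in the remark following Theorem \ref{TSerFRef}). Since $H^m(\Omega)\hookrightarrow L^{\frac{2N}{N-2m}}(\Omega)$ and the growth exponent may be as large as $\varrho=\frac{N+2m}{N-2m}$, for $\varrho>\frac{N}{N-2m}$ one has $2\varrho>\frac{2N}{N-2m}$, so $f_1(u(t))$ is only guaranteed to lie in $L^{\frac{2N}{N+2m}}(\Omega)\subset X^{-\frac12}$, never in $X$. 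Consequently the orbit does not ``stay in the $Y$-domain'' by consistency of realizations: the two semigroups agree only on the linear flow, while the Duhamel term $\int_0^t e^{-\mathbb{A}_{(-1)}(t-s)}\mathbb{F}({\bf u}(s))\,ds$ has integrand merely in $Y_{(-1)}$. The same obstruction recurs in your variational equation: $D\mathbb{F}({\bf u}){\bf p}$ has third component $f_1'(u)p_1+f_2'(v)p_2+f_3'(w)p_3$, which again lands only in $X^{-\frac12}$, so analytic-semigroup regularity in the extrapolated scale can at best give $C^2$ with values in $Y_{(-1)}^{\alpha}$, not in $Y=X^{\frac12}\times X^{\frac12}\times X$ as the theorem asserts.

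The paper avoids this by not bootstrapping within the scale at all. It differentiates the equation in time and recasts the resulting fourth-order (in $t$) problem as a first-order system for $(u,\partial_t u,\partial_t^2u,\partial_t^3u)$ on the new phase space $Z=X^1\times X^1\times X^1\times X$, where the $X^1$ regularity of the first three components is built into the space rather than produced by smoothing; since the companion operator $\mathbb{B}$ is singular, it is replaced by an invertible $\mathbb{B}_0$ with the discarded terms absorbed into the nonlinearity $\overline{H}$, and Theorem \ref{existenceofsolution} is applied a second time on $Z$. This is where the hypothesis ${\bf u}_0\in X^1\times X^1\times X^1$ actually does work: it makes ${\bf u}_0$ an admissible datum for the augmented problem, and the $X^1$ regularity of the components is then propagated because it is part of the phase space. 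Your argument never exploits this hypothesis beyond placing ${\bf u}_0$ in $D(\mathbb{A})$, which, due to the cross-product structure, is too weak to make the Nemytskii maps land in $X$; to repair the proof you would need to adopt some version of the paper's augmented-system device (or otherwise prove propagation of componentwise $X^1$ regularity), not merely refine the semigroup bootstrap.
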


\begin{proof}
The proof of this theorem consists in the verification of the assumptions of Theorem \ref{existenceofsolution} and the proof of the time
regularity in the last statement of it. First we remember the sectoriality of $\mathbb{A}_{(-1)}$ using Theorem \ref{TSerFRef}.  From all this and from Theorem \ref{existenceofsolution} all assertions of Theorem \ref{DtTheoremmm} follows except the last one. Let us consider
\[
\partial^{4}_tu+\alpha \partial^{3}_tu +\beta A\partial^{2}_tu +\gamma A\partial_t u+\delta A\partial^{3}_t u=\partial_1f\partial_tu+\partial_2f\partial_t^2u+\partial_3f\partial_t^2u,
\]
where $\partial^k_{t}u$ denotes the $k$-order derivative of the function $u$ with respect to time, and $\partial_if$ denotes the partial derivative of the function $f$ with respect to $i-$th coordinate. To see that  the last statement holds let ${\bf u}_0=\begin{bmatrix}
u_0\\ v_0\\ w_0
\end{bmatrix}\in X^1\times X^1\times X^1 $, ${\bf u}(\cdot,{\bf u}_0)\in C^2((0,T);X^\frac12\times X^\frac12\times X )$, where ${\bf u}=\begin{bmatrix}
u\\ \partial_t u\\ \partial^{2}_t u
\end{bmatrix}$, we apply again Theorem \ref{existenceofsolution} to the following Cauchy problem
\[
\dfrac{d}{dt}\begin{bmatrix}
u\\ v\\ w\\ z   
\end{bmatrix}    +\mathbb{B} \begin{bmatrix}
u\\ v\\ w\\ z   
\end{bmatrix}=\overline{F}\left(\begin{bmatrix}
u\\ v\\ w\\ z   
\end{bmatrix}\right),\quad t>0,
\]
\[
\begin{bmatrix}
u(0)\\ v(0)\\ w(0)\\ z(0)   
\end{bmatrix}=\begin{bmatrix}
u_0\\ v_0\\ w_0\\ -\alpha w_0-\beta A v_0-\gamma Au_0-\delta Aw_0+{\bf f}(u_0)   
\end{bmatrix}
\]
with $Z=X^1\times X^1\times X^1\times X$, where $\mathbb{B}$ denotes the unbounded linear operator $\mathbb{B}:D(\mathbb{B})\subset Z\to Z$ given by
\[
D(\mathbb{B})=X^1\times X^1\times X^1\times X^1
\]
such that
\[
\forall \begin{bmatrix}
u\\ v\\ w\\ z   
\end{bmatrix}\in D(\mathbb{B}),\quad \mathbb{B}\begin{bmatrix}
u\\ v\\ w\\ z   
\end{bmatrix}=\begin{bmatrix}
0 & -I & 0 & 0\\ 0 & 0 & -I & 0\\ 0 & 0 & 0 & -I\\ 0& \gamma A & \beta A & \alpha I+\delta A  
\end{bmatrix} \begin{bmatrix}
u\\ v\\ w\\ z   
\end{bmatrix}= \begin{bmatrix}
-v\\ -w\\ -z\\ \gamma A v+\beta Aw+\alpha z+\delta Az,
\end{bmatrix}  
\]
and
\[
\overline{F}\left(\begin{bmatrix}
u\\ v\\ w\\ z   
\end{bmatrix}\right)=\begin{bmatrix}
0\\ 0\\ 0\\ \overline{{\bf f}}(u,v)   
\end{bmatrix}
\]
with 
\[
\overline{{\bf f}}(u,v,w)(x)=\partial_1f((u,v,w)(x))v(x)+\partial_2f((u,v,w)(x))w(x)+\partial_3f((u,v,w)(x))\partial_tw(x). 
\]
Firstly, we need to rewrite the problem since $\mathbb{B}$ is singular and is not invertible in $Z$. Therefore, we consider the Cauchy problem
\[
\dfrac{d}{dt}\begin{bmatrix}
u\\ v\\ w\\ z   
\end{bmatrix}    +\mathbb{B}_0 \begin{bmatrix}
u\\ v\\ w\\ z   
\end{bmatrix}=\overline{H}\left(\begin{bmatrix}
u\\ v\\ w\\ z   
\end{bmatrix}\right),\quad t>0,
\]
\[
\begin{bmatrix}
u(0)\\ v(0)\\ w(0)\\ z(0)   
\end{bmatrix}=\begin{bmatrix}
u_0\\ v_0\\ w_0\\ -\alpha w_0-\beta v_0-\gamma Au_0-\delta Aw_0+{\bf f}(u_0,v_0,w_0)   
\end{bmatrix}
\]
with $Z=X^1\times X^1\times X^1\times X$, and the unbounded linear operator $\mathbb{B}_0:D(\mathbb{B}_0)\subset Z\to Z$ given by
\[
D(\mathbb{B}_0)=X^1\times X^1\times X^1\times X^1
\]
and
\[
\forall \begin{bmatrix}
u\\ v\\ w\\ z   
\end{bmatrix}\in D(\mathbb{B}_0),\quad \mathbb{B}_0\begin{bmatrix}
u\\ v\\ w\\ z   
\end{bmatrix}=\begin{bmatrix}
-I & 0 & 0 & 0\\ 0 & -I & 0 & 0\\ 0 & 0 & 0 & -I\\ 0 & \gamma A & \beta A & \alpha I+\delta A
\end{bmatrix} \begin{bmatrix}
    u \\ v \\ w \\ z
\end{bmatrix} = \begin{bmatrix}
-u\\ -v\\ -z\\ \gamma A v + \beta A w+ (\alpha I+\delta A)z,
\end{bmatrix}  
\]
and
\[
\overline{H}\left(\begin{bmatrix}
u\\ v\\ w\\ z   
\end{bmatrix}\right)=\begin{bmatrix}
v-u\\ w-v\\ 0\\ \overline{h}(u,v,w,z)   
\end{bmatrix}
\]
with $\overline{h}(u,v,w,z)(x)=\overline{{\bf f}}(u,v,w)(x)$. 

Then, there is a unique solution which is a continuously
differentiable function with values in $Z=X^1\times X^1\times X^1\times X$  since the nonlinearity $\overline{h}:Z\to Z$ is Lipschitz in bounded subsets of $Z$ and  $\mathbb{B}_0$  easily satisfies all properties of  Theorem \ref{existenceofsolution}. Also, using techniques similar to those used to study ${\bf f}$, we obtain
properties of the nonlinearity $\overline{h}:X^1\times X^1\times X^1\times X\to X$ that ensure that $\overline{H}$  also satisfies the hypothesis of  Theorem \ref{existenceofsolution} and
proves the last statement of Theorem \ref{DtTheoremmm}.
\end{proof}

\section{Statements and Declarations}

The author(s) declare(s) that there is no conflict of interest.

\medskip

\noindent{\bf Funding}

This work was supported by CNPq/Brazil (Grant number  \# 303039/2021-3).

\medskip

\noindent{\bf Competing Interests}

The authors have no relevant financial or non-financial interests to disclose.

\noindent{\bf Author Contributions}

All authors contributed to the study conception and design. 

\end{document}